\renewcommand{\geq}{\geqslant}
\renewcommand{\leq}{\leqslant}
\newtheorem{proposition}{Proposition}[section]
\newtheorem{corollary}[proposition]{Corollary}
\newtheorem{lemma}[proposition]{Lemma}
\newtheorem*{main-theorem}{Main Theorem}
\newtheorem*{theorem*}{Theorem}
\newtheorem{theorem}{Theorem}
\theoremstyle{definition}
\newtheorem{remark}[proposition]{Remark}
\newtheorem*{remark*}{Remark}
\numberwithin{equation}{section}
\def\phi{\varphi}
\def\reals{{\mathbb R}}
\def\phi{\varphi}
\def\be{\begin{eqnarray*}}
\def\ee{\end{eqnarray*}}
\def\ben{\begin{eqnarray}}
\def\een{\end{eqnarray}}
\def\L2R{L_{\text{Rest}}^2}
\def\11{\mathds{1}}
\def\L2c{L^2_{\text{comp}}}
\newcommand\numberthis{\addtocounter{equation}{1}\tag{\theequation}}
\begin{document}

\title[Observability for the Schr\"{o}dinger Equation on Simplices]{Asymptotic Boundary Observability for the Schr\"{o}dinger Equation on Simplices}
\author{Sarah Carpenter and Hans Christianson}

\begin{abstract}
    We consider the Schr\"{o}dinger equation $(i\partial_t+\Delta)u=0$ on an $n$-dimensional simplex with Dirichlet boundary conditions. We use a commutator argument along with integration by parts to obtain an observability asymptotic for any one face of the simplex. Rather than the typical observability inequality, we are able to do better as we instead prove a large-time asymptotic. Note that this paper parallels \cite{lu}, in which Christianson-Lu prove the analogous result with the wave equation. 
\end{abstract}

\maketitle
\section{Introduction}
In this paper, we study boundary observability for solutions to the
Schr\"{o}dinger equation on $n$-dimensional simplices. The result is a
large-time asymptotic observability identity on any one face. This
paper is part of a collection by the first author and collaborators
considering boundary observability for the wave equation and
equidistribution of Neumann data mass on a triangle or simplex. 
In \cite{equidistribution}, it is shown that the $L^2$ norm of the (semi-classical) Neumann data on each side is \textit{equal} to the length of the side divided by the area of the triangle, and a generalization to simplices is given in \cite{equidistribution-n}. An asymptotic boundary observability for solutions to the wave equation is proved for triangles in \cite{stafford} and for simplices in \cite{lu}, and we  prove this asymptotic for solutions to the Schr\"{o}dinger equation on triangles and simplices in this paper.

The proofs are similar to these other papers, in which we use
a commutator argument and integration by parts, while the proof for
simplices will also require linear algebra and  symplectic
geometry. Note that this is a much simpler approach than the
traditional controllability/observability argument that uses geometric
optics and microlocal analysis.  However, the proof is particular to
simplices and does not work for other polytopes.  In fact, the main
result is false in general.

Let $\Omega \subset \mathbb{R}^n$ be a non-degenerate simplex with faces $G_0,\dots,G_n$. Let $\mbox{Vol}_n(\Omega)$ be the volume of $\Omega$ and $\mbox{Vol}_{n-1}(G_j)$ be the $(n-1)$-dimensional induced volume of $G_j$. 
We consider the Schr\"{o}dinger equation with Dirichlet boundary conditions on $\Omega$:
\begin{equation} \label{SEn}
    \begin{cases} 
      (i\partial_t+\Delta)u=0 \\
      u|_{\partial \Omega}=0 \\
      u(x,0)=u_0(x), x\in \mathbb{R}^n, \\
   \end{cases}
\end{equation}
where $u_0 \in H_0^1(\Omega) \cap H^s(\Omega)$ $\forall \, s\geq 0$. Next we consider the (conserved) energy for the Schr\"{o}dinger equation, defined by the $\dot{H}^1$ mass,
\begin{equation*}
    E(t)=\int_\Omega |\nabla u|^2 \,dx.
\end{equation*}
\begin{theorem}
  \label{T:main}
Suppose $u$ solves the Schr\"{o}dinger equation (\ref{SEn}). Then $\forall \, T> 0$, the Neumann data on each of the boundary faces satisfies 
\begin{equation}
    \int_0^T\int_{G_j} |\partial_\omega u|^2 dS_j \,dt=\frac{2T \, {\mbox{Vol}_{n-1}(G_j)}}{n \, \mbox{Vol}_n(\Omega)}E(0)\left(1+\mathcal{O}\left(\frac{1}{T}\right)\right),
\end{equation}
where $\partial_\omega$ is the normal derivative on $\partial\Omega$, and $dS_j$ is the surface measure on $G_j$. 
\end{theorem}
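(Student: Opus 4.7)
The plan is to prove this by the multiplier/Rellich method, using a vector field adapted to the simplex geometry. For a fixed face $G_j$, let $v_j$ be the unique vertex of $\Omega$ opposite $G_j$, and set $\vec{X}_j(x) = x - v_j$. Two geometric observations make $\vec{X}_j$ the right multiplier. First, every face $G_k$ with $k \neq j$ contains $v_j$, so for $x \in G_k$ the vector $x - v_j$ lies in the linear subspace parallel to $G_k$; hence $\vec{X}_j \cdot \omega = 0$ on every face other than $G_j$. Second, on $G_j$ itself, $\vec{X}_j \cdot \omega$ equals the height $d_j$ of $\Omega$ over $G_j$, and the elementary volume formula gives $d_j = n\,\mathrm{Vol}_n(\Omega)/\mathrm{Vol}_{n-1}(G_j)$. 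The vector field $\vec{X}_j$ is also linear, so $\nabla \vec{X}_j = I$ and $\mathrm{div}\,\vec{X}_j = n$, which will simplify the bulk terms considerably.

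Next, I set $J(t) = \int_\Omega (\vec{X}_j \cdot \nabla u)\bar{u}\,dx$ and compute $\frac{d}{dt}\,\mathrm{Im}\,J(t)$. Using $u_t = i\Delta u$, differentiation under the integral gives $\dot{J} = i\int (\vec{X}_j \cdot \nabla \Delta u)\bar{u}\,dx - i\int (\vec{X}_j \cdot \nabla u)\Delta \bar{u}\,dx$. Applying integration by parts (and using $u|_{\partial\Omega} = 0$, so $\nabla u$ is purely normal on $\partial\Omega$ and $|\nabla u|^2 = |\partial_\omega u|^2$ there), together with the standard Rellich identity for $\int \Delta u\,(\vec{X}_j \cdot \nabla \bar{u})\,dx$, I expect to arrive at
\begin{equation*}
\frac{d}{dt}\,\mathrm{Im}\,J(t) \;=\; 2\,E(t) \;-\; \int_{\partial\Omega} (\vec{X}_j \cdot \omega)\,|\partial_\omega u|^2\,dS.
\end{equation*}
The bulk terms should collapse cleanly to $2E(t)$ because $\mathrm{div}\,\vec{X}_j = n$ and $\nabla \vec{X}_j = I$ conspire so that the $n$ from the divergence cancels with the $(n-2)$ from the Rellich piece, up to a factor of $2$ times $\int |\nabla u|^2$.

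Integrating this identity from $0$ to $T$ and invoking the conservation of energy $E(t) = E(0)$, the geometric observation above reduces the boundary integral on the right to $d_j\int_{G_j}|\partial_\omega u|^2\,dS_j$, so that
\begin{equation*}
d_j \int_0^T \int_{G_j} |\partial_\omega u|^2 \, dS_j\,dt \;=\; 2T\,E(0) \;-\; \bigl[\mathrm{Im}\,J(T) - \mathrm{Im}\,J(0)\bigr].
\end{equation*}
Dividing by $d_j$ and inserting $d_j = n\,\mathrm{Vol}_n(\Omega)/\mathrm{Vol}_{n-1}(G_j)$ recovers the claimed leading term $2T\,\mathrm{Vol}_{n-1}(G_j)\,E(0)/(n\,\mathrm{Vol}_n(\Omega))$.

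To extract the $(1 + \mathcal{O}(1/T))$ factor, I would bound the remainder by a constant multiple of $E(0)$ independent of $T$. Cauchy--Schwarz gives $|J(t)| \leq \|\vec{X}_j\|_{L^\infty(\Omega)}\,\|\nabla u(t)\|_{L^2}\,\|u(t)\|_{L^2}$, and both factors on the right are conserved by the Schr\"odinger flow with Dirichlet data. A single application of Poincar\'e's inequality then yields $\|u_0\|_{L^2} \leq C \sqrt{E(0)}$, so $|\mathrm{Im}\,J(t)| \leq C\,E(0)$ uniformly in $t$; factoring this bound out against the leading $T$-linear term produces the asymptotic. The main obstacle I anticipate is making sure the bulk terms really cancel to give exactly $2E(t)$ with the correct sign (the several integration-by-parts steps each generate boundary contributions that must vanish because of Dirichlet conditions, and an arithmetic sign error anywhere in the commutator with $\Delta$ would either kill the leading constant or flip its sign); the linear algebra/symplectic content alluded to in the introduction likely enters here and especially in the refined accounting of $\vec{X}_j \cdot \omega$ face-by-face in higher-dimensional simplices, where one must verify that no lower-dimensional stratum of $\partial\Omega$ contributes a spurious boundary term.
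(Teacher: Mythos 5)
Your proposal is correct, and at its core it is the same commutator/multiplier argument as the paper's: a radial vector field centered at the vertex opposite the observed face, the identity $[\Delta, X] = 2\Delta$, Dirichlet conditions forcing $\nabla u = (\partial_\omega u)\,\omega$ on the boundary, and a remainder controlled by conserved quantities plus Poincar\'e. The claimed identity $\frac{d}{dt}\mathrm{Im}\,J(t) = 2E(t) - \int_{\partial\Omega}(\vec{X}_j\cdot\omega)|\partial_\omega u|^2\,dS$ checks out, as do the geometric facts that $\vec{X}_j\cdot\omega$ vanishes on every face through $v_j$ and equals the altitude $d_j = n\,\mbox{Vol}_n(\Omega)/\mbox{Vol}_{n-1}(G_j)$ on $G_j$.

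Where you genuinely diverge is in the execution: the paper first maps $\Omega$ to the standard simplex by $y = Bx$, tracks how the Laplacian, the energy, the normal vectors, and the surface measures transform (this is the ``linear algebra and symplectic geometry'' of its Section 3), runs the commutator argument there with $Y = \sum y_k\partial_{y_k}$, and then painstakingly transforms the boundary integral back, picking up factors of $\det A = n!\,\mbox{Vol}_n(\Omega)$ and $(n-1)!\,\mbox{Vol}_{n-1}(G_0)$. You work directly on $\Omega$ and replace all of that bookkeeping with the single observation that $(x - v_j)\cdot\omega$ is constant equal to the height on $G_j$ and zero on the other faces, together with the pyramid volume formula. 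This is essentially the paper's own planar-triangle proof promoted verbatim to $n$ dimensions, and it is arguably cleaner: no change of variables, no transformed energy $\widetilde{E}$, no surface-measure Jacobians, and the geometric constant appears transparently. What the paper's route buys in exchange is that all normal vectors and tangent spaces become explicit coordinate objects on the standard simplex, which makes each individual verification mechanical. Two small points to tighten in a write-up: state the bulk cancellation as the commutator identity $[\Delta, \vec{X}_j\cdot\nabla] = 2\Delta$ rather than as a Rellich-identity bookkeeping of $\mathrm{div}\,\vec{X}_j$ against $\nabla\vec{X}_j$ (in the conjugate pairing you use, the divergence term never actually appears), and note explicitly that the $(n-2)$-dimensional skeleton is negligible for the boundary integrals given the assumed regularity of $u_0$.
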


\begin{remark}
The assumption that $u_0 \in H_0^1(\Omega) \cap H^s(\Omega)$ for all
$s \in \reals$ is overkill.  We just make this assumption so we can
integrate by parts without worrying about regularity issues.

\end{remark}

Using the Poincar\'e inequality, we know there exists $C>0$ such that
$\| u \| \leq C \| \nabla u \|$, which gives the following Corollary.

\begin{corollary}
\label{C:main}
  Under the assumptions of Theorem \ref{T:main}, there exists a constant
$c>0$ such that
  \begin{equation}
    \label{E:poincare-obs}
\int_0^T\int_{G_j} |\partial_\omega u|^2 dS_j \,dt \geq c \| u_0
\|^2_{L^2(\Omega)}
\end{equation}
for each $j = 0 , 1 , \ldots n$.

\end{corollary}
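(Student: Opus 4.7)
The plan is to deduce Corollary \ref{C:main} directly from Theorem \ref{T:main} by combining conservation of the $\dot{H}^1$ energy with the Poincar\'e inequality on $\Omega$. First I would fix $T_0 > 0$ large enough that the implicit error in the $\mathcal{O}(1/T)$ term of Theorem \ref{T:main} is bounded in absolute value by $1/2$ for every $T \geq T_0$. Since the hidden constant in $\mathcal{O}(1/T)$ arises from the commutator/integration-by-parts argument and is purely geometric, it depends only on $\Omega$ and on the face $G_j$, not on the initial data, so such a $T_0$ can be chosen uniformly in $u_0$. For all $T \geq T_0$, Theorem \ref{T:main} then gives
\[
\int_0^T\int_{G_j} |\p_\omega u|^2 \, dS_j \, dt \;\geq\; \frac{T\,\mathrm{Vol}_{n-1}(G_j)}{n\,\mathrm{Vol}_n(\Omega)} \, E(0).
\]

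Next, because $u_0 \in H_0^1(\Omega)$ and $\Omega$ is bounded, the Poincar\'e inequality yields a constant $C_P = C_P(\Omega) > 0$ satisfying $\|u_0\|_{L^2(\Omega)}^2 \leq C_P^2 \|\nabla u_0\|_{L^2(\Omega)}^2 = C_P^2 \, E(0)$. Substituting $E(0) \geq C_P^{-2}\|u_0\|_{L^2(\Omega)}^2$ into the previous display immediately produces the observability estimate \eqref{E:poincare-obs} with
\[
c \;=\; \frac{T\,\mathrm{Vol}_{n-1}(G_j)}{n\,\mathrm{Vol}_n(\Omega)\,C_P^2}.
\]
Since there are only finitely many faces, I would then take the minimum of these constants over $j = 0, 1, \dots, n$ to obtain a single $c > 0$ that works simultaneously for every face. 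To cover the regime $T < T_0$, I would use that the left-hand side of \eqref{E:poincare-obs} is monotone nondecreasing in $T$, so an estimate valid at $T_0$ can be rearranged into a (possibly smaller) uniform constant, or the conclusion can simply be interpreted for $T \geq T_0$.

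The main (and essentially only) obstacle is the first step: one needs to check that the implicit constant in the $\mathcal{O}(1/T)$ of Theorem \ref{T:main} truly depends only on the geometry of $\Omega$ and $G_j$, not on $u_0$ (apart from the explicit factor of $E(0)$ that has already been factored out). Provided this uniformity holds---which will follow from the explicit commutator computations in the proof of Theorem \ref{T:main}---Corollary \ref{C:main} is an immediate consequence of Poincar\'e.
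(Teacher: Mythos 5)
Your proposal is exactly the paper's argument: the Corollary is obtained by combining the asymptotic of Theorem \ref{T:main} (for $T$ large enough that the $1+\mathcal{O}(1/T)$ factor is bounded below by $1/2$) with the Poincar\'e inequality $\|u_0\|_{L^2(\Omega)} \leq C\|\nabla u_0\|_{L^2(\Omega)} $, and the implicit constant is indeed purely geometric since it comes from the commutator/integration-by-parts computation with $E(0)$ already factored out. One small caution: your appeal to monotonicity for $T<T_0$ goes the wrong way, since $\int_0^T \leq \int_0^{T_0}$ gives an upper, not a lower, bound for small $T$; the honest reading (consistent with the paper, which is silent on this point) is that the constant $c$ is obtained for $T \geq T_0$, or else must be allowed to depend on $T$.
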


\begin{remark}
The statement of Corollary \ref{C:main} is the more familiar
observability {\it inequality} rather than the asymptotic in Theorem
\ref{T:main}.  The estimate \eqref{E:poincare-obs} says one can
``observe'' the initial $L^2$ norm by taking a measurement on one side
of the simplex.  It is very interesting to note that we have an
asymptotic observation of $\| \nabla u_0 \|^2$ and observation
inequality of $\| u_0 \|^2$.

\end{remark}

\subsection{History}
A landmark result of controllability was \cite{rt}, in which Rauch and
Taylor showed exponential decay of the energy of solutions to damped hyperbolic equations in bounded domains given the geometric control condition is satisfied - that is every ray hits the region of control in some finite time. Rauch and Taylor consider the control region being both a fixed subregion of the domain as well as a fixed subset of the boundary.
The closely related idea of observability for solutions of the wave
equation  observes the initial energy  by taking a measurement in the
control region. Another landmark result is that of Bardos-Lebeau-Rauch
\cite{blr}, in which they prove a similar condition for the boundary
in that every ray must hit the observability region on the boundary
transversally.

These results make heavy use of microlocal analysis and geometric
optics.  To get an idea of the subtlety to these proofs,
the papers of Lebeau \cite{Leb-damp}, Christianson \cite{Chr-NC,Chr-NC-erratum}, and
Burq-Christianson \cite{BuCh-dw} show that if the geometric control condition
fails in a weak sense, then there is a sharp loss in energy decay rate
and regularity.  One of the novelties of \cite{stafford,lu} and the present work
is that it does not require a geometric control assumption.

Now these results are not applicable for solutions to the Schr\"{o}dinger equation as it is not hyperbolic. Controllability and observability have certainly been studied with the Schr\"{o}dinger equation, but majorly on interior subsets of the domain as the observability region. Jaffard \cite{jaffard} proved an internal control for solutions to the Schr\"{o}dinger equation, which was extended by Burq, Zworski, and Bourgain to control results on tori \cite{bz12,bbz,bz17}.
Lebeau \cite{lebeau} did, however, consider controllability on the boundary for subsets that satisfy the geometric control condition from \cite{blr}. 

\section{Proof for Planar Triangles}
In this section, we summarize the proof of Theorem \ref{T:main} for
triangles.  The proof for triangles does not require any special
change of variables as in the proof for simplices, so is a friendly
introduction to the main ideas.

Let $\Omega \subset \mathbb{R}^2$ be a triangle with sides $A,B,C$. Let $\ell_A, \ell_B, \ell_C$ denote the respective altitudes (the perpendicular distance from the side to the non-adjacent corner). Let $L$ be the length of the longest side. We consider the following initial/boundary value problem for the Schr\"{o}dinger equation: 
\begin{equation} \label{SE2}
    \begin{cases} 
      (i\partial_t+\Delta)u=0 \\
      u|_{\partial \Omega}=0 \\
      u(x,y,0)=u_0(x,y),
   \end{cases}
\end{equation}
where $u_0\in H_0^1(\Omega) \cap H^s(\Omega)$ $\forall \, s\geq 0$. We denote the (conserved) initial energy by 
\begin{equation*}
    E(t)=\int_\Omega |\nabla u|^2 \,dV.
\end{equation*} 

\begin{theorem}
  \label{T:main-tri}
Suppose $u$ solves the Schr\"{o}dinger equation (\ref{SE2}). Then $\forall \, T>0$, the Neumann data on side A satisfies 
\begin{equation}\label{EQdim2}
\int_0^T\int_A |\partial_\nu u|^2 dS \,dt=\frac{2T}{\ell_A}E(0)\left(1+\mathcal{O}\left(\frac{1}{T}\right)\right),
\end{equation}
where $\partial_\nu$ is the normal derivative on $\partial \Omega$ and $dS$ is the arc length measure. 
The analogous asymptotic on sides $B$ and $C$ also holds. 
\end{theorem}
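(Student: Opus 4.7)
The plan is to prove Theorem \ref{T:main-tri} by a Morawetz-type multiplier identity tailored to the geometry of the triangle. Let $x_0$ denote the vertex opposite side $A$ and take the radial vector field
\begin{equation*}
X(x) = x - x_0.
\end{equation*}
Two geometric facts single out this choice. First, for every $p \in A$, a direct computation gives $X(p) \cdot \nu \equiv \ell_A$, where $\nu$ is the outward unit normal to $A$. Second, on each of the two sides $B$ and $C$ adjacent to $x_0$, the vector $X$ is tangent to the side (since $x_0$ is an endpoint), so $X \cdot \nu \equiv 0$ there. Thus the boundary flux of $X$ is concentrated on the single face we want to observe.

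The main step is to introduce the multiplier quantity
\begin{equation*}
F(t) \; = \; \Im \int_\Omega \bar u \, (X \cdot \nabla u) \, dV
\end{equation*}
and differentiate in $t$ using $\partial_t u = i \Delta u$. Substituting and integrating by parts twice in the spatial variables --- morally, this is pairing a commutator of $\Delta$ with the multiplier $X \cdot \nabla$ against $\bar u$ --- and using the Dirichlet condition together with $|\nabla u|^2 = |\partial_\nu u|^2$ on $\partial \Omega$, should yield the pointwise-in-$t$ identity
\begin{equation*}
F'(t) \; = \; 2 E(t) \; - \; \ell_A \int_A |\partial_\nu u|^2 \, dS.
\end{equation*}
The bulk term $2 E(t)$ arises from $\partial_i X_j = \delta_{ij}$; no $|u|^2$ contribution appears because $\Delta \div X = 0$ for an affine $X$; and the full boundary integral $\int_{\partial \Omega}(X \cdot \nu)|\partial_\nu u|^2\, dS$ collapses to the $A$-piece by the second geometric observation above.

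Integrating from $0$ to $T$ and invoking conservation of energy $E(t) \equiv E(0)$ gives
\begin{equation*}
\ell_A \int_0^T \!\! \int_A |\partial_\nu u|^2 \, dS \, dt \; = \; 2 T E(0) \; + \; F(0) - F(T).
\end{equation*}
To finish I would control $F(0) - F(T)$ via Cauchy--Schwarz and the Poincar\'e inequality on $H_0^1(\Omega)$: $|F(t)| \leq \|X\|_{L^\infty(\Omega)} \|u(t)\|_{L^2} \|\nabla u(t)\|_{L^2} \leq C(\Omega)\, E(0)$, so $|F(0) - F(T)| = \mathcal{O}(E(0))$ uniformly in $T$. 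Dividing through by $\ell_A$ then produces exactly \eqref{EQdim2}.

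The essential ingredient is the choice of $X$: asking for a single affine vector field whose normal component vanishes on all but one chosen face is exactly what a triangle (and more generally a simplex) affords, and this is what makes the argument specific to this geometry. I expect the one step that requires care is the double integration by parts --- making sure every interior $|u|^2$ contribution is seen to vanish and every tangential boundary contribution on $B \cup C$ is accounted for; once $F'(t)$ is in the form above, the remaining time integration and Poincar\'e bound are routine, and the same template should drive the higher-dimensional Theorem \ref{T:main} after an affine change of variables placing the chosen opposite vertex at the origin.
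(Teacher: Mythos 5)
Your proposal is correct and is essentially the paper's own argument: your radial field $X = x - x_0$ is exactly the paper's $x\partial_x + y\partial_y$ after placing the opposite vertex at the origin, your identity for $F'(t)$ is the commutator computation $[i\partial_t+\Delta,X]=2\Delta$ paired against $\bar u$ and integrated by parts, the collapse of the boundary flux to side $A$ uses the same two geometric facts, and the $\mathcal{O}(E(0))$ bound on $F(0)-F(T)$ via Cauchy--Schwarz and Poincar\'e matches the paper's treatment of the term $\int_\Omega iXu\,\bar u\,\big|_0^T$.
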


\begin{remark}
As noted in \cite{stafford}, when dealing with solutions to the wave
equation, the appearance of the factor $1/\ell_A$ in (\ref{EQdim2}) is
due to the finite propagation speed, as it takes approximately time
$\ell_A$ for a wave to travel from the opposite corner to side
$A$. However, we do not see the infinite speed of propagation for
solutions of the
Schr\"{o}dinger equation in the present result.  
\end{remark}

As in \cite{stafford}, the proof is broken down into two cases: acute
and obtuse (or right) triangles. We  only prove the acute case as the obtuse case is very similar. Additionally, we show that this result does not hold generally on polygons by giving a counterexample on a square. 

Without loss of generality, we prove Theorem \ref{T:main-tri} only for side $A$, and thus we  let $\ell=\ell_A$. Let $\Omega$ be an acute triangle, oriented in such a way that side $A$ is parallel to the $y$-axis, and the corner opposite from $A$ is at the origin, as shown in Figure 1. Thus the altitude of length $\ell$ corresponds with the $x$-axis. We label the remaining sides $B$ and $C$ as in Figure 1. Let $a_1$ be the length of the part of $A$ below the $x$-axis, and $a_2$ the part above. 
\begin{figure}
  \centering \includegraphics[width=3in]{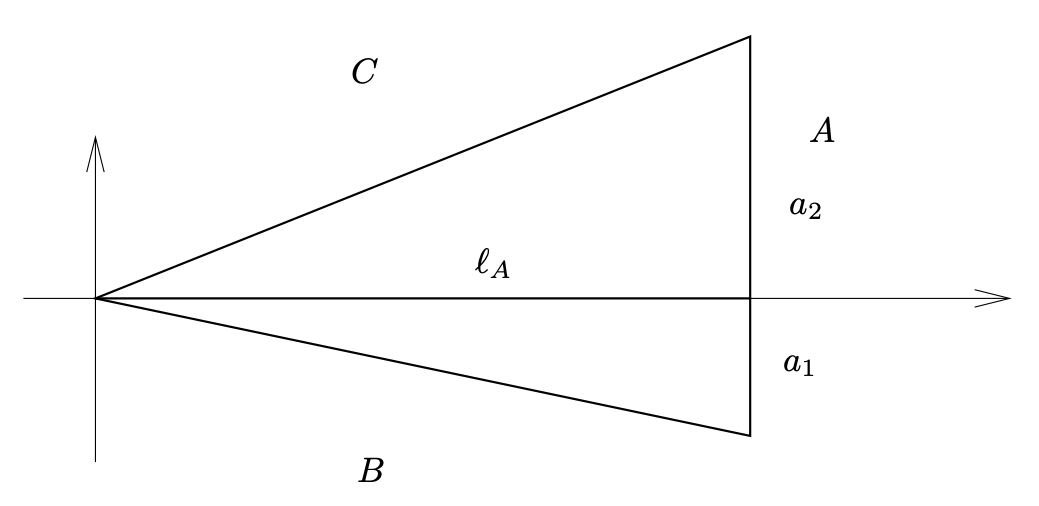}
  \caption{Setup for acute triangles.}
\end{figure}

\begin{proof}[Proof of Theorem \ref{T:main-tri}]
We begin by showing the energy is conserved. We use Green's Theorem and the fact that $u$ satisfies (\ref{SE2}).

\begin{align*}
    \frac{d}{\,dt} E(t) &= \frac{d}{\,dt} \int_\Omega \nabla u\cdot \nabla \bar{u} \,dV \\
    &= \int_\Omega 2\mbox{Re}\left(\nabla u_t\cdot \nabla \bar{u}  \right)\,dV \\
    &= 2\mbox{Re}\left(i\int_\Omega \nabla (\Delta u) \cdot \nabla \bar{u}\,dV  \right) \\  
    &= 2\mbox{Re}\left(-i\int_\Omega \Delta u \Delta \bar{u}\,dV +i\int_{\partial\Omega} \partial_\nu\bar{u} \Delta u \,dS \right) \\ 
    &= 2\mbox{Re}\left(-i\int_\Omega |\Delta u|^2 \,dV +\int_{\partial\Omega} (\partial_\nu\bar{u}) u_t \,dS \right) \\ 
    &= 2\mbox{Re}\left(-i\int_\Omega |\Delta u|^2 \,dV  \right) \\ 
    &=0
\end{align*}
Consider the vector field $X=x\partial_x+y\partial_y$ on $\Omega$ and the commutator $[i\partial_t+\Delta,X]$. Then 
\begin{align*}
    [i\partial_t+\Delta,X] &= (i\partial_t+\Delta)X-X(i\partial_t+\Delta) \\
    &=\Delta (x\partial_x)+\Delta (y\partial_y) - X\Delta  \\
    &=2\Delta  +x\partial_x\Delta +y\partial_y\Delta  - X\Delta \\
    &=2\Delta.
\end{align*}
Therefore, using Green's Theorem on $u$ satisfying (\ref{SE2}),
\begin{align*}
    \int_0^T\int_\Omega ([i\partial_t+\Delta,X]u)\bar{u} \,dV \,dt 
    &= 2\int_0^T\int_\Omega( \Delta u)\bar{u} \,dV \,dt  \\
    &=-2\int_0^T\int_\Omega \nabla u \cdot \nabla \bar{u}  \,dV \,dt  +2\int_0^T \int_{\partial\Omega} (\partial_\nu u) \bar{u} \,dS \,dt   \\
    &=-2\int_0^T E(0) \,dt \\
    &= -2TE(0). \numberthis \label{2com1}
\end{align*}
Also note that if $u$ satisfies the Schr\"{o}dinger equation (\ref{SE2}), then $\overline{i u_t}+\Delta\bar{u}=0$, and also that $\overline{i u_t}=-i\bar{u}_t$. Now evaluating the integral directly using integration by parts/Green's Theorem and the fact that $u$ satisfies (\ref{SE2}),
\begin{align*}
    \int_0^T\int_\Omega ([i\partial_t+\Delta,X]u)\bar{u} \,dV \,dt 
    &=\int_0^T\int_\Omega (
    (i\partial_t+\Delta)Xu)\bar{u}-(X(i\partial_t+\Delta)u) \bar{u} \,dV \,dt \\
    &=\int_0^T\int_\Omega ( i\partial_t Xu)\bar{u} + (\Delta Xu)\bar{u} \,dV \,dt \\
    &= -\int_0^T\int_\Omega (iXu) \bar{u}_t + (\nabla Xu)\cdot (\nabla \bar{u})\,dV \,dt \\
    &\qquad \qquad + \int_\Omega (iXu) \bar{u}|_0^T \,dV + \int_0^T \int_{\partial\Omega} (\partial_\nu Xu)\bar{u} \,dS \,dt \\
    &=\int_0^T\int_\Omega (Xu)\overline{iu_t} + (Xu)\Delta \bar{u} \,dV \,dt \\
    &\qquad \qquad + \int_\Omega (iXu)\bar{u}|_0^T \,dV -  \int_0^T \int_{\partial\Omega} (Xu)(\partial_\nu \bar{u}) \,dS \,dt \\
    &=\int_\Omega (iXu)\bar{u}|_0^T \,dV -  \int_0^T \int_{\partial\Omega} (Xu)(\partial_\nu \bar{u}) \,dS \,dt. \numberthis \label{2com2}
\end{align*}
Combining our results from (\ref{2com1}) and (\ref{2com2}), we obtain 
\begin{equation}\label{EQ2}
    \int_0^T\int_{\partial\Omega} (Xu)(\partial_\nu \bar{u}) \,dS \,dt=2TE(0)+\int_\Omega (iXu)\bar{u}|_0^T \,dV.
\end{equation}
We now put a bound on the last term of (\ref{EQ2}) in terms of
the energy $E(0)$ using a Poincar\'e type inequality. The following lemma is stated and proved in \cite{stafford}. 
\begin{lemma}\label{lemma}
Let $u \in H_0^1(\Omega) \cap H^s(\Omega)$ $\forall \,s$. Then the following holds: 
\begin{equation*}
   ||u||_{L^2(\Omega)} \leq L\sqrt{e-1}||\partial_x u||_{L^2(\Omega)}
\end{equation*}
\end{lemma}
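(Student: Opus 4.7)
The plan is to reduce the two-dimensional estimate on $\Omega$ to a one-dimensional weighted Poincar\'e-type inequality along horizontal cross-sections, and then integrate in $y$.

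First I would exploit the geometry: for each fixed $y$ in the vertical range of $\Omega$, the horizontal slice $I(y) = \Omega \cap \{y = \text{const}\} = [x_0(y), x_1(y)]$ is an interval whose length is at most $L$. Since $u \in H_0^1(\Omega)$, for a.e.\ $y$ the function $x \mapsto u(x,y)$ vanishes at the endpoints of $I(y)$, so we are reduced to a one-dimensional Dirichlet problem on a segment of length at most $L$.

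Next, for each such $y$, I would write $u(x,y) = \int_{x_0(y)}^x \partial_x u(s,y)\,ds$ and apply a weighted Cauchy--Schwarz with an exponential weight of the form $e^{\alpha(s - x_0(y))/L}$ to obtain a pointwise bound on $|u(x,y)|^2$. Integrating this bound in $x$ over $I(y)$ and carrying out the resulting elementary calculus (in which the identity $\int_0^1 e^t\,dt = e-1$ enters after a suitable choice of $\alpha$) would produce
\begin{equation*}
\int_{x_0(y)}^{x_1(y)} |u(x,y)|^2 \, dx \leq L^2(e-1) \int_{x_0(y)}^{x_1(y)} |\partial_x u(x,y)|^2 \, dx.
\end{equation*}
Integrating over $y$ and applying Fubini's theorem then yields $\|u\|_{L^2(\Omega)}^2 \leq L^2(e-1)\|\partial_x u\|_{L^2(\Omega)}^2$, and taking square roots gives the lemma.

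The main obstacle will be identifying the precise weight and optimization that produces the specific constant $\sqrt{e-1}$; a naive Cauchy--Schwarz gives different constants, such as $1/\sqrt{2}$ (using both Dirichlet endpoints) or $1/\pi$ (from the sharp Wirtinger inequality), so recovering the form $L\sqrt{e-1}$ requires this careful weighted computation. Once the one-dimensional estimate is in hand, the passage back to two dimensions via Fubini is routine.
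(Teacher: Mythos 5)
The paper does not actually prove this lemma itself --- it is quoted from \cite{stafford} --- so there is no in-paper argument to compare against; your plan is correct and is essentially the standard slicing-plus-exponential-weight (Gronwall-type) argument that yields this particular constant. To confirm the computation you left open: writing
$|u(x,y)|^2 \le \bigl(\int_{x_0(y)}^{x} e^{(s-x_0(y))/L}\,ds\bigr)\bigl(\int_{x_0(y)}^{x} e^{-(s-x_0(y))/L}|\partial_s u(s,y)|^2\,ds\bigr) \le L(e-1)\int_{I(y)}|\partial_x u|^2\,dx$
(the first factor is at most $L\int_0^1 e^t\,dt = L(e-1)$ because each slice has length at most the diameter $L$ of the triangle, and the second weight is at most $1$), then integrating in $x$ over $I(y)$ and in $y$ via Fubini gives $\|u\|_{L^2}^2 \le L^2(e-1)\|\partial_x u\|_{L^2}^2$ as desired.
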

Trivially from this lemma we get that $||u||_{L^2(\Omega)}\leq L\sqrt{e-1}||\nabla u||_{L^2(\Omega)}$.
By the triangle inequality, Cauchy's inequality with parameter $\alpha>0$, and Lemma \ref{lemma},
\begin{align*}
    \left| \int_\Omega iXu\bar{u} \,dV  \right| &= \left| \int_\Omega i(x\partial_x+y\partial_y)u\bar{u} \,dV  \right| \\
    &\leq \int_\Omega |x\partial_xu||\bar{u}|+ |y\partial_yu||\bar{u}| \,dV \\
    &\leq \int_\Omega \alpha|u|^2+\frac{L^2}{2\alpha}(|\partial_xu|^2+|\partial_yu|^2) \,dV \\
    &\leq \int_\Omega \alpha L^2(e-1)|\nabla u|^2+\frac{L^2}{2\alpha}|\nabla u|^2 \,dV  \\
    &=\frac{3}{2}\int_\Omega L^2\sqrt{e-1}|\nabla u|^2 \,dV \\
    &\leq 2L^2\sqrt{e-1}E(0),
\end{align*}
upon taking $\alpha=\frac{1}{\sqrt{e-1}}$. Therefore (assuming a
non-zero solution), 
\begin{align*}
    \frac{1}{2TE(0)}\left| \int_0^T\int_{\partial\Omega} (Xu)(\partial_\nu \bar{u}) \,dS \,dt-2TE(0)\right| &=\frac{1}{2TE(0)}\left| \int_\Omega iXu\bar{u}|_0^T \,dV \right| \\
    &\leq \frac{L^2\sqrt{e-1}}{T},
\end{align*}
giving us 
\begin{equation}\label{Asym}
    \int_0^T\int_{\partial\Omega} (Xu)(\partial_\nu \bar{u}) \,dS \,dt=2TE(0)\left(1+\mathcal{O}\left(\frac{1}{T}\right)\right).
\end{equation}
We now obtain the Neumann data on $A$ from the left hand side of (\ref{Asym}). As $u|_{\partial\Omega}=0$, the tangential derivative along the boundary vanishes. On side $A$, the tangential derivative is $\partial_y$ and the normal derivative is $\partial_x$, so $X=x\partial_x+y\partial_y=x\partial_\nu$. Thus 
\begin{align*}
   \int_0^T\int_{A} (Xu)(\partial_\nu \bar{u}) \,dS \,dt 
   &= \int_0^T\int_{A} (x\partial_\nu u)(\partial_\nu \bar{u}) \,dS \,dt \\
   &=\int_0^T\int_{A} \ell|\partial_\nu u|^2 \,dS \,dt. \numberthis\label{intA}
\end{align*}
Now on side $B$, $y=-\frac{a_1}{\ell}x$, so the unit tangent vector is $\tau=(\frac{\ell}{b},-\frac{a_1}{b})$ and the unit normal vector is $\nu=(-\frac{a_1}{b},-\frac{\ell}{b})$. As the tangential derivative vanishes, 
\begin{equation*}
    \tau\cdot\nabla u=\frac{\ell}{b}\partial_x u-\frac{a_1}{b}\partial_y u =0,
\end{equation*}
so $\partial_y u=\frac{\ell}{a_1} \partial_x u$. Then 
\begin{equation*}
Xu =x\partial_x u+y\frac{\ell}{a_1} \partial_x u 
=x\partial_x u-x\frac{a_1}{\ell}\frac{\ell}{a_1}\partial_x u=0,
\end{equation*}
and therefore, 
\begin{equation}\label{intB}
    \int_0^T\int_{B} (Xu)(\partial_\nu \bar{u}) \,dS \,dt=0.
\end{equation}
Lastly, on side $C$, $y=\frac{a_2}{\ell}x$, so the unit tangent vector is $\tau=(-\frac{\ell}{c},-\frac{a_2}{c})$ and the unit normal vector is $\nu=(-\frac{a_1}{b},-\frac{\ell}{b})$. As the tangential derivative vanishes, 
\begin{equation*}
    \tau\cdot\nabla u=-\frac{\ell}{c}\partial_x u-\frac{a_2}{c}\partial_y u=0,
\end{equation*}
and therefore, $\partial_y u=-\frac{\ell}{a_2}\partial_x u$. Again we see that 
\begin{equation*}
    Xu=x\partial_x u-y\frac{\ell}{a_2} \partial_x u 
=x\partial_x u -x\frac{a_2}{\ell}\frac{\ell}{a_2} \partial_x u=0,
\end{equation*}
and thus 
\begin{equation}\label{intC}
    \int_0^T\int_{C} (Xu)(\partial_\nu \bar{u}) \,dS \,dt=0.
\end{equation}
Combining (\ref{intA}), (\ref{intB}), and (\ref{intC}) we obtain
\begin{align*}
\int_0^T\int_{A} \ell|\partial_\nu u|^2 \,dS \,dt &=
\int_0^T\int_{\partial\Omega} (Xu)(\partial_\nu \bar{u}) \,dS \,dt \\
&=2TE(0)\left(1+\mathcal{O}\left(\frac{1}{T}\right)\right).
\end{align*}
Finally, dividing by $\ell$, we obtain (\ref{EQdim2}).
\end{proof}

\subsection*{Failure on a Square Domain}
We  now show that this result does not hold generally on polygons by
giving a counterexample on a square. Consider the square domain
$\Omega=[0,2\pi]\times [0,2\pi]$ and the function
$u(x,y,t)=\frac{1}{\pi}e^{-it(1+n^2)}\sin(x)\sin(ny)$ for some integer
$n>0$. Then $u$ satisfies
\[ \begin{cases} 
      (i\partial_t+\Delta)u=0 \\
      u|_{\partial \Omega}=0 \\
      u(x,y,0)=u_0(x,y) 
   \end{cases}
\]
for $u_0\in H_0^1(\Omega) \cap H^s(\Omega)$ $\forall \, s\geq 0$ and has energy 
\begin{align*}
    E(0) &= \frac{1}{\pi^2} \int_0^{2\pi} \int_0^{2\pi} \cos^2(x)\sin^2(ny)+n^2\sin^2(x)\cos^2(ny) \,dx \,dy \\
    &= \frac{1}{\pi^2} \int_0^{2\pi} \int_0^{2\pi} \frac{1}{2}(1+\cos(2x))\sin^2(ny)+\frac{n^2}{2}(1-\cos(2x))\cos^2(ny) \,dx \,dy \\
    &=\frac{1}{\pi^2} \int_0^{2\pi} \pi \sin^2(ny)+n^2\pi \cos^2(ny) \,dy  \\
    &=\frac{1}{\pi} \int_0^{2\pi} \frac{1}{2}(1+\cos(2ny))+\frac{n^2}{2}(1-\cos(2ny)) \,dy \\
    &= 1+n^2. \numberthis\label{counterenergy}
\end{align*}
We show that along the right edge $\{2\pi\}\times [0,2\pi]$ the desired observability does not hold if $n$ is large enough. On this edge, $\partial_\nu=\partial_x$, so 
\begin{align*}
    \int_0^T\int_0^{2\pi} |\partial_\nu u|^2 |_{x=2\pi} \,dy \,dt 
    &= \int_0^T\int_0^{2\pi} \frac{1}{\pi^2} |e^{-it(1+n^2)}\cos(2\pi)\sin(ny)|^2 \,dy \,dt \\
    &= \frac{1}{\pi^2} \int_0^T\int_0^{2\pi} \sin^2(ny) \,dy \,dt \\
    &=\frac{T}{\pi} \\
    & = \frac{T}{\pi (1 + n^2)} E(0).
\end{align*}
Then there is no $C_T$ such that the observability condition 
\begin{equation*}
    \int_0^T\int_0^{2\pi} |\partial_\nu u|^2 |_{x=2\pi} \,dy \,dt \geq C_T E(0)
\end{equation*}
holds for all solutions $u$. 
Thus $u$ serves as a counterexample to  Theorem \ref{T:main-tri} on a polygon that is different from a triangle. 

\section{Proof of Theorem \ref{T:main}}

\begin{remark}
In the triangle proof, we were able to find an explicit constant from
Lemma \ref{lemma} to give us our asymptotic, however, it is not
sharp. In the proof of simplices, we will not find an explicit constant. 
\end{remark}

Let \{$p_1,\dots, p_n$\} $\subset \mathbb{R}^n$ be linearly independent vectors, and let $p_0$ denote the origin in $\mathbb{R}^n$. Then the $n$-dimensional simplex spanned by  \{$p_1,\dots, p_n$\} is defined by
\begin{equation}
    \Omega=\left\{ \sum_{j=0}^{n} t_jp_j : \sum t_j=1 \mbox{ and } t_j \geq 0  \right\}.
\end{equation}
The standard simplex is the simplex in which $p_j=e_j$ for each $j=1,\dots, n$, where $e_j$ are the standard basis vectors, and we denote it by $\widetilde{\Omega}$. We define the matrix 
$$ A=
\begin{bmatrix}
| & | & \cdots & | \\ p_1 & p_2 & \cdots & p_n \\ | & | & \cdots & |
\end{bmatrix},
$$ which is invertible as $p_1,\dots, p_n$ are linearly independent, and thus we let $B=A^{-1}$. Then for $x\in \mathbb{R}^n$, we let $y=Bx$. Then as $Bp_j=e_j$, we see that 
\begin{equation}
    \widetilde{\Omega}=\left\{ \sum_{j=0}^{n} t_jBp_j : \sum t_j=1 \mbox{ and } t_j \geq 0  \right\},
\end{equation}
and thus this transformation takes our simplex $\Omega$ to the standard simplex $\widetilde{\Omega}$.

Now we lift this transformation to $T^{*}\mathbb{R}^n$. For $\xi \in \mathbb{R}^n$ in our $x$-coordinates, we use symplectic geometry to see that $\eta=(B^{-1})^T\xi$ is the momentum variable in the $y$-coordinates. Now as the symbol of the Laplacian in the $x$-coordinates is $\xi^T\xi=\xi_1^2+\cdots+\xi_n^2$, the symbol of the Laplacian in the $y$-coordinates is $\xi^T\xi=\eta^TBB^T\eta$. We let $\Gamma=BB^T$. Thus the Laplacian in the $y$-coordinates is $$\widetilde{\Delta}=\sum_{i,j}^n \Gamma_{ij}\partial_{y_i}\partial_{y_j},$$ and we see that $-\widetilde{\Delta}$ is elliptic as $\Gamma$ is positive definite. Lastly, the energy in terms of the $y$-coordinates is given by 
\begin{equation}
    \widetilde{E}(t)=\int_{\widetilde{\Omega}} |B^T\nabla u|^2 \,dy,
\end{equation}
where $\nabla=\nabla_y$. Note that for the remainder of the paper, $\nabla$ will represent $\nabla_y$.
\begin{remark}
We only prove the Theorem \ref{T:main} on the side $G_0$ as we could begin by translating the simplex so that a different \say{corner} is the origin.
\end{remark}

\subsubsection*{Proof:}
We begin by proving that the energy is conserved on the standard simplex. We use the version of Green's formula from 
\cite{equidistribution}, and the fact that $u$ satisfies 
\begin{equation} \label{SEn0}
    \begin{cases} 
      (i\partial_t+\widetilde{\Delta})u=0 \\
      u|_{\partial \widetilde{\Omega}}=0. \\
   \end{cases}
\end{equation}
Also note that if $u$ satisfies (\ref{SEn0}), then $\overline{i u_t}+\widetilde{\Delta}\bar{u}=0$, and also that $\overline{i u_t}=-i\bar{u}_t$. Then
\begin{align*}
  \frac{d}{dt} E(t) &= \frac{d}{dt} \int_{\widetilde{\Omega}} (B^T\nabla u) \cdot (B^T\nabla \bar{u}) \,dy \\
  &=2\mbox{Re}\left(\int_{\widetilde{\Omega}} (B^T\nabla u_t) \cdot (B^T\nabla \bar{u}) \,dy \right) \\
  &=2\mbox{Re}\left(-\int_{\widetilde{\Omega}} (\nabla^TBB^T\nabla \bar{u})u_t \,dy +\int_{\partial\widetilde{\Omega}} (\nu^T BB^T \nabla \bar{u})u_t \,d\widetilde{S} \right) \\
    &=2\mbox{Re}\left(-\int_{\widetilde{\Omega}} (\widetilde{\Delta} \bar{u})u_t \,dy \right) \\
    &=2\mbox{Re}\left(-i\int_{\widetilde{\Omega}} (\bar{u}_t)u_t \,dy \right) \\ 
    &=2\mbox{Re}\left(-i\int_{\widetilde{\Omega}} |u_t|^2 \,dy \right) \\ 
    &=0,
\end{align*}
where $d\widetilde{S}$ is the surface measure and $\nu$ is the outward normal vector on the standard simplex $\widetilde{\Omega}$. 
Now consider the vector field $Y=y_1\partial_{y_1}+\cdots+y_n\partial_{y_n}$ and the commutator $[i\partial_t+\widetilde{\Delta},Y]$ on the standard simplex $\widetilde{\Omega}$. As $\widetilde{\Delta}$ is a constant coefficient symmetric operator, we have that $[i\partial_t+\widetilde{\Delta},Y]=2\widetilde{\Delta}$. 
Using this along with Green's Theorem and the fact that $u$ satisfies (\ref{SEn0}), 
\begin{align*}
   \int_0^T \int_{\widetilde{\Omega}} [i\partial_t+\widetilde{\Delta},Y] u\bar{u} \,dy \,dt &= 2\int_0^T \int_{\widetilde{\Omega}}  (\widetilde{\Delta} u)\bar{u} \,dy \,dt \numberthis \label{lap} \\
    &=2\int_0^T \int_{\widetilde{\Omega}} (\nabla^TBB^T\nabla u)\bar{u} \,dy \,dt \\
   &=-2\int_0^T \int_{\widetilde{\Omega}} (B^T\nabla u) \cdot (B^T\nabla \bar{u}) \,dy \,dt \\
    &\qquad \qquad+\int_0^T\int_{\partial\widetilde{\Omega}} (\nu^T BB^T \nabla \bar{u})u_t \,d\widetilde{S} \,dt \\
    &=-2\int_0^T \int_{\widetilde{\Omega}}|B^T\nabla u|^2 \,dy \,dt  \\
    &=-2\int_0^T \widetilde{E}(0) \,dt \\
    &=-2T\widetilde{E}(0) \numberthis \label{com1}. 
\end{align*}
Now we compute the same integral without first simplifying the commutator. Again, we use integration by parts/Green's Theorem and the fact that $u$ satisfies (\ref{SEn0}). 
\begin{align*}
    \int_0^T \int_{\widetilde{\Omega}} ([i\partial_t+\widetilde{\Delta},Y] u)\bar{u} \,dy \,dt
    &=\int_0^T \int_{\widetilde{\Omega}}(
    (i\partial_t+\widetilde{\Delta})Y u) \bar{u}-(Y(i\partial_t+\widetilde{\Delta})u)\bar{u} \,dy \,dt \\
    &=\int_0^T \int_{\widetilde{\Omega}} i(\partial_tYu)\bar{u}+(\widetilde{\Delta}Y u)\bar{u} \,dy \,dt \\
    &=\int_0^T \int_{\widetilde{\Omega}} -i(Yu)\bar{u}_t+(Y u)(\widetilde{\Delta}\bar{u}) \,dy \,dt +\int_{\widetilde{\Omega}} i(Yu)\bar{u}|_0^T \,dy \\
    &\qquad \qquad +\int_0^T \int_{\partial\widetilde{\Omega}} (\nu^T BB^T \nabla(Yu))\bar{u} \,d\widetilde{S} \,dt \\ &\qquad \qquad -\int_0^T \int_{\partial\widetilde{\Omega}} (Yu)(\nu^T BB^T \nabla\bar{u}) \,d\widetilde{S} \,dt \\
    &= \int_0^T \int_{\widetilde{\Omega}} (Yu)(\overline{i  u_t}+\widetilde{\Delta}\bar{u}) \,dy \,dt +\int_{\widetilde{\Omega}} i(Yu)\bar{u}|_0^T \,dy \\
    &\qquad \qquad -\int_0^T \int_{\partial\widetilde{\Omega}} (Yu)(\nu^T BB^T \nabla\bar{u}) \,d\widetilde{S} \,dt \\
    &=\int_{\widetilde{\Omega}} i(Yu)\bar{u}|_0^T \,dy -\int_0^T \int_{\partial\widetilde{\Omega}} (Yu)(\nu^T BB^T \nabla\bar{u}) \,d\widetilde{S} \,dt. \numberthis \label{com2}
\end{align*}
Combining our results from (\ref{com1}) and (\ref{com2}), we obtain \begin{equation}\label{EQ}
     \int_0^T \int_{\partial\widetilde{\Omega}} (Yu)(\nu^T BB^T \nabla\bar{u}) \,d\widetilde{S} \,dt=2T\widetilde{E}(0)+\int_{\widetilde{\Omega}} iYu\bar{u}|_0^T \,dy. 
\end{equation}
Now the point of our change of coordinates was to make the computation of the normal vectors easier. On our standard simplex $\widetilde{\Omega}$, we denote the boundary faces $F_0, \dots, F_n$, where $F_j$ denotes the face in which $y_j=0$ for $j=1,\dots,n$, and $F_0$ the remaining face. Thus the normal vector on $F_j$ is $$\nu_j=-e_j$$ for $j=1,\dots,n$. Then on $F_0$, the normal vector is $$\nu_0=n^{-1/2}(1,\dots,1).$$
Thus the normal derivative for $j=1,\dots,n$ is $$\partial_{\nu_j}=-\partial_{y_j},$$ and $$ \partial_{\nu_0}=n^{-1/2}(\partial_{y_1}+\cdots+\partial_{y_n}). $$ By our Dirichlet boundary conditions, the tangential derivatives of $u$ vanish. Thus on $F_j$ for $j=1,\dots,n$, $\partial_\ell u=0$  except for $\ell=j$. But as $y_j=0$ on $F_j$, we see that for $j=1,\dots,n$, $$Yu|_{F_j}=y_1\partial_{y_1}u+\cdots+y_n\partial_{y_n}u=0.$$
As $(1,-1,0,\dots,0)$ is tangent to $F_0$, we see that $\partial_{y_1}u=\partial_{y_2}u$, and similarly, we see that $$\partial_{y_1}u=\cdots=\partial_{y_n}u.$$
Thus for $j=1,\dots,n$, 
\begin{equation} \label{normal}
    \partial_{\nu_0}u=n^{-1/2}(\partial_{y_1}u+\cdots+\partial_{y_n}u)=n^{-1/2}(n\partial_{y_j}u)=n^{1/2} \partial_{y_j}u.
\end{equation}
Then as $y_1+\cdots+y_n=1$ on $F_0$, 
$$ Yu|_{F_0}=y_1\partial_{y_1}u+\cdots+y_n\partial_{y_n}u=(y_1+\cdots+y_n)n^{-1/2}\partial_{\nu_0}u=n^{-1/2}\partial_{\nu_0}u. $$
Now we rewrite (\ref{EQ}) as 
\begin{equation}\label{reEQ}
     \int_0^T \int_{F_0} (n^{-1/2}\partial_{\nu_0}u)(\nu_0^T BB^T \nabla\bar{u}) \,d\widetilde{S}_0 \,dt=2T\widetilde{E}(0)+\int_{\widetilde{\Omega}} iYu\bar{u}|_0^T \,dy. 
\end{equation}

Our goal now is to bound the last term of (\ref{reEQ}) in terms of the energy $E(0)$. Note that the following constant $C$ changes with each calculation. Indeed, using the triangle inequality and Cauchy's inequality (on the first and fourth step),
\begin{align*}
    \left| \int_{\widetilde{\Omega}} iYu\bar{u} \,dy \right|
    &\leq \int_{\widetilde{\Omega}} C\left( \sum_{j=1}^n |y_j\partial_{y_j}u| \right)^2+C|\bar{u}|^2 \,dy \\
    &\leq \int_{\widetilde{\Omega}} C\left( \sum_{j=1}^n |\partial_{y_j}u| \right)^2+C|u|^2 \,dy \\
    &= \int_{\widetilde{\Omega}} C \sum_{i,j}^n (|\partial_{y_i}u||\partial_{y_j}u|) +C|u|^2 \,dy \\
    &\leq \int_{\widetilde{\Omega}} C\sum_{i,j}^n (|\partial_{y_i}u|^2+|\partial_{y_j}u|^2) +C|u|^2 \,dy \\
    &\leq \int_{\widetilde{\Omega}} C|\nabla u|^2 +C|u|^2 \,dy \\
    &\leq \int_{\widetilde{\Omega}} C|\nabla u|^2 \,dy,
\end{align*}
where the last step follows from the Poincaré inequality. 

Now this is almost the energy term that we are looking for, but recall that the energy on $\widetilde{\Omega}$ has the transformation $B$ in it. As $-\widetilde{\Delta}$ is an elliptic operator, $||\nabla u||_{L^2} \leq C \langle -\widetilde{\Delta}u,u\rangle_{L^2}$ for some $C$. Using this along with our calculations in (\ref{lap}),
\begin{align*}
    \left| \int_{\widetilde{\Omega}} iYu\bar{u} \,dy \right|
    &\leq \int_{\widetilde{\Omega}} C(-\widetilde{\Delta}u)\bar{u} \,dy \\
    &=C\widetilde{E}(0).
\end{align*}
Therefore, we see that (for non-zero solutions) 
\begin{align*}
    \frac{1}{2T\widetilde{E}(0)} \left|  \int_0^T \int_{F_0} (n^{-1/2}\partial_{\nu_0}u)(\nu_0^T BB^T \nabla\bar{u}) \,d\widetilde{S}_0 \,dt-2TE(0)\right| &= \frac{1}{2T\widetilde{E}(0)} \left| \int_{\widetilde{\Omega}} iYu\bar{u}|_0^T \,dy  \right| \\
    &\leq \frac{C}{T},
\end{align*}
and thus we obtain the asymptotic 
\begin{equation}\label{ssEQ}
    \int_0^T \int_{F_0} (n^{-1/2}\partial_{\nu_0}u)(\nu_0^T BB^T \nabla\bar{u}) \,d\widetilde{S}_0 \,dt=2T\widetilde{E}(0)\left( 1+\mathcal{O}\left( \frac{1}{T}\right) \right).
\end{equation}
Now we must transform back to the original simplex $\Omega$. We start with the right side of (\ref{ssEQ}). As the Jacobian of a matrix change of variables $x=Ay$ is $\mbox{det}(A)$, and the volume of the standard simplex is $1/n!$, we see that $ \mbox{det}(A)=n!\mbox{Vol}(\Omega). $ Thus
\begin{align*}
    2T\widetilde{E}(0)\left( 1+\mathcal{O}\left( \frac{1}{T}\right) \right)&=2T\left( 1+\mathcal{O}\left( \frac{1}{T}\right) \right) \int_{\widetilde{\Omega}} |B^T\nabla u_0|^2 \,dy \\
    &= \frac{2T}{\mbox{det}(A)}\left( 1+\mathcal{O}\left( \frac{1}{T}\right) \right) \int_{\Omega} |\nabla_x u_0|^2 \,dx \\
    &=\frac{2T}{n!\mbox{Vol}_n(\Omega)}E(0)\left( 1+\mathcal{O}\left( \frac{1}{T}\right) \right). \numberthis \label{eq1}
\end{align*}
We now work to transform the left side of (\ref{ssEQ}). We change variables from the surface measure back to the rectangular coordinates by writing $y_n$ as a graph over the other coordinates. As $F_0=\{ y_n=1-y_1-\cdots-y_{n-1} \}$, letting $dy'=dy_1\cdots dy_{n-1}$, we see that 
\begin{align*}
    d\widetilde{S}_0
    &=((dy')^2+(dy_2\cdots dy_n)^2+\cdots+(dy_1\cdots dy_{n-2} dy_n)^2)^{1/2}   \\
    &=\left(1+\left(\frac{dy_n}{dy_1}\right)^2+\cdots+\left(\frac{dy_n}{dy_{n-1}}\right)^2\right)^{1/2}dy' \numberthis \label{meas}\\
    &=(1^2+(-1)^2+\cdots+(-1)^2)^{1/2} dy' \\
    &=n^{1/2} dy'.
\end{align*}
Then we change variables from the rectangular coordinates to the surface measure on the original simplex $\Omega$. 
Changing variables on $F_0$ induces the $(n-1)$-dimensional volume of the $(n-1)$-dimensional parallelepiped spanned by $p_1,p_2-p_1,\dots,p_n-p_1$, which we call $P_0$, noting that the simplex spanned by these vectors is precisely the face $G_0$. Thus we see that 
\begin{equation*}
    \mbox{Vol}_{n-1}(P_0)=(n-1)!\mbox{Vol}_{n-1}(G_0). 
\end{equation*}

Now we transform the integrand of (\ref{ssEQ}) back to the standard simplex. 
On $F_0$, by (\ref{normal}), 
\begin{equation*}
    \nabla_y u|_{F_0} =n^{-1/2}(1,\dots,1)\partial_{\nu_0}u|_{F_0}
    =\nu_0\partial_{\nu_0}u|_{F_0}.
\end{equation*}
Recall that $\omega_0$ is the normal to the face $G_0$ on $\Omega$. Then
\begin{align*}
    \partial_{\omega_0}u|_{G_0} &= \omega_0^T\nabla_x u|_{G_0} \\
    &=\omega_0^TB^T\nabla_y u|_{F_0} \\
    &=(\omega_0^TB^T\nu_0)\partial_{\nu_0}u|_{F_0}, 
\end{align*}
and thus 
\begin{equation}\label{int1}
\partial_{\nu_0}u|_{F_0}=\frac{\partial_{\omega_0}u|_{G_0}}{(\omega_0^TB^T\nu_0)}.
\end{equation}
As the tangential derivative vanishes, $\nu_0^T B \nabla_x\bar{u}|_{G_0}$ vanishes except for the projection onto $\omega_0$. Thus we project $\nu_0^TB$ onto $\omega_0^T$, and get that 
\begin{equation*}
    \mbox{proj}_{(\nu_0^TB)}\omega_0^T=\frac{\omega_0^T\cdot(\nu_0^TB)}{\omega_0^T\cdot\omega_0^T}\omega_0^T=(\omega_0^T(\nu_0^TB)^T)\omega_0^T=(\omega_0^TB^T\nu_0)\omega_0^T,
\end{equation*}
as $\omega_0^T$ is a unit vector. Therefore, 
\begin{align*}
    \nu_0^T BB^T \nabla_y\bar{u}|_{F_0}
    &=\nu_0^T B \nabla_x\bar{u}|_{G_0} \\
    &=(\omega_0^TB^T\nu_0)\omega_0^T\nabla_x\bar{u}|_{G_0} \numberthis \label{int2} \\ 
    &= (\omega_0^TB^T\nu_0) \partial_{\omega_0} \bar{u}|_{G_0}.
\end{align*}
Thus by our change of variables in (\ref{meas}) and our calculations in (\ref{int1}) and (\ref{int2}), the left side of (\ref{ssEQ}) now reads
\begin{flalign*}
   & \int_0^T \int_{F_0} (n^{-1/2}\partial_{\nu_0}u)(\nu_0^T BB^T \nabla\bar{u}) \,d\widetilde{S}_0 \,dt&
\end{flalign*}
\begin{align*}    
    &=\int_0^T \int_{F_0} (n^{-1/2}\partial_{\nu_0}u)(\nu_0^T BB^T \nabla\bar{u})n^{1/2} \,dy'  \,dt \\
    &=\frac{1}{\mbox{Vol}_{n-1}(P_0)}\int_0^T \int_{G_0} \left(\frac{\partial_{\omega_0}u}{(\omega_0^TB^T\nu_0)}\right)((\omega_0^TB^T\nu_0) \partial_{\omega_0} \bar{u}) \,dy' \,dt \\
    &=\frac{1}{(n-1)!\mbox{Vol}_{n-1}(G_0)}\int_0^T \int_{G_0} |\partial_{\omega_0}u|^2 \,dy' \,dt. \numberthis \label{eq2}
\end{align*}
Thus equating (\ref{eq1}) and (\ref{eq2}), we obtain our desired result on $F_0$: 
\begin{align*}
    \int_0^T \int_{G_0} |\partial_{\omega_0}u|^2 \,dy' \,dt
    &=\frac{2T(n-1)!\mbox{Vol}_{n-1}(G_0)}{n!\mbox{Vol}_n(\Omega)}E(0)\left( 1+\mathcal{O}\left( \frac{1}{T}\right) \right) \\
    &=\frac{2T\mbox{Vol}_{n-1}(G_0)}{n\mbox{Vol}_n(\Omega)}E(0)\left( 1+\mathcal{O}\left( \frac{1}{T}\right) \right).
\end{align*}

\bibliography{schrodinger}{}
\bibliographystyle{alpha}

\end{document}